\newtheorem{theorem}{Theorem}
\newtheorem{lemma}[theorem]{Lemma}
\newcommand{\bone}{\textbf{1}}
\newcommand{\hw}{\mbox{wt}}
\newcommand{\ahw}{\overline{\text{wt}}}
\newcommand{\bC}{\textbf{C}}
\begin{document}

\title{Improving Uniquely Decodable Codes in Binary Adder Channels}


\author{
József Balogh$^{1}$,
The Nguyen$^{1}$,
Patric R.J. Östergård$^{2}$,
Ethan P. White$^{1}$,
and Michael C. Wigal$^{1}$
}
\thanks{Balogh was supported in part by NSF grants DMS-1764123 and RTG DMS-1937241, FRG DMS-2152488, the Arnold O. Beckman Research Award (UIUC Campus Research Board RB 24012), the Langan Scholar Fund (UIUC). Nguyen is supported by David G. Bourgin Mathematics Fellowship. White is supported in part by an NSERC PDF. Wigal is supported in part by NSF  RTG DMS-1937241.\\
\texttt{\{jobal, thevn2, epw, wigal\}@illinois.edu}, \quad \texttt{patric.ostergard@aalto.fi}}

\maketitle

\begin{center}
\vspace{-1em}
\small
$^{1}$Department of Mathematics, University of Illinois Urbana–Champaign, Urbana IL 61801, USA \\
$^{2}$Department of Information and Communications Engineering, Aalto University School of Electrical Engineering, P.O. Box 15600, 00076 Aalto, Finland \\[6pt]
\end{center}

\begin{abstract}
Uniquely decodable codes for the $T$-user binary adder channel can be directly extended to
longer codes with the same sum rate. It is shown that if at least one of the original
constituent codes does not have average weight exactly half of the dimension, 
then there exists a code in a higher dimension with a strictly higher sum rate. Using 
this method, the best known lower bounds on the sum rate of codes for the $T$-user 
binary adder channel are improved for all $T \geq 2$. The case $T=2$ was coined 
the co-Sidon problem by Lindstr{\"o}m in his seminal work in the 1960s and is also known 
as the multi-set union-free problem. 
\end{abstract}

\section{Introduction} 

The $T$-user \emph{binary adder channel} is a communication system where $T$ users are synchronously sending messages encoded as binary vectors over a common channel. User $i$, $1 \leq i \leq T$ sends messages belonging to a  \emph{constituent code} $C_i \subset \{0,1\}^d$. The messages of all users are transmitted as a sum over $\mathbb{Z}^d$. The code $C_1,\ldots,C_T$ is \emph{uniquely decodable} (UD) if all transmitted sums are unique, that is, for every $x \in \mathbb{Z}^d$ there is at most one solution to
\[ u_1 + u_2 + \ldots + u_T = x,\]
where $u_i \in C_i$ and addition occurs over $\mathbb{Z}^d$. With a UD code the total number of messages that can be sent is $V = \prod_{i=1}^T |C_i|$, and the amount of
information that can be transmitted per symbol is $\log_2 V^{1/d}$ bits, which is defined
as the \emph{sum rate} of the code. 

For a UD code, one may consider the tuple of rates of the constituent codes.
The set of all possible such tuples of UD codes form a region in the $T$-dimensional Euclidean 
space called the \emph{zero-error capacity region} of the channel. The zero-error
capacity region of the 2-user binary adder channel has been thoroughly studied
\cite{AB,AKKN,CT,KLWY,K,MO,T,UL,W}. For $T \geq 3$, it is a complex task to study 
and visualize the zero-error capacity region, so the main focus has been on 
determining bounds on the maximum sum rate. 

The best known upper 
bound on the sum rate of codes for the $T$-user binary adder channel comes from an 
entropy argument of Lindstr\"om~\cite{L1}, which was later extended by Liao~\cite{HJL}:
\begin{equation}\label{upperBound} R_{\text{upper}} = \sum_{k=0}^T \frac{\binom{T}{k}}{2^T} \log_2 \frac{2^T}{\binom{T}{k}}. 
\end{equation}

Throughout the paper, approximate lower bounds are rounded down and approximate upper bounds are rounded up to get valid bounds. As for specific bounds on the sum rate of codes for the $T$-user binary adder channel, Lindstr\"om~\cite{L1} observed that the constituent codes 
$C_1 = \{01, 10, 11\}$ and $C_2 = \{00, 11\}$ give a lower bound 
$\log_2 \sqrt{6}\approx 1.2924$ on the maximum sum rate,
which can be compared with the upper bound of 
$\log_2 \sqrt{8} = 1.5$ from~\eqref{upperBound}. The current best known
lower bounds for $T=2$, $3 \leq T \leq 5$, and $T \geq 6$ have been obtained
in \cite{MO}, \cite{KO}, and \cite{KM}, respectively. Best known lower 
and upper bounds for $T \leq 8$ are, respectively, given in columns $R_{\text{old}}$ and 
$R_{\text{upper}}$ of Table~\ref{boundtable}.
For the lower bounds, the dimension
of the corresponding code is also given, in column $d_{\text{old}}$. Finally,
there are two columns for the dimension ($d_{\text{new}}$) and sum rate
($R_{\text{new}}$) of the codes constructed in the present work. 

\begin{table}[htbp] 
\centering
\vspace*{5mm}
 \begin{tabular}{| c |c | c| c | c | c | } 
 \hline
$T$ & $d_{\text{old}}$ & $R_{\text{old}}$ & $d_{\text{new}}$ & $R_{\text{new}}$ & $R_{\text{upper}}$ \\ 
\hline
\hline
2 & 6 & 1.3178 & 852 & 1.3184 & 1.5000\\ 
3 & 6 & 1.5381 & 432 & 1.5395 & 1.8113\\ 
4 & 4 & 1.7500 & 980 & 1.7505 & 2.0307\\ 
5 & 4 & 1.8962 & 896 & 1.8970 & 2.1982\\ 
6 & 4 & 2.0000 & 104 & 2.0052 & 2.3334\\ 
7 & 8 & 2.0731 & 128 & 2.0774 & 2.4467\\ 
8 & 6 & 2.1666 & 384 & 2.1683 & 2.5442\\
\hline
\end{tabular}
\vspace*{5mm}
\caption{Bounds on the maximum sum rate of $T$-user UD codes.}
\label{boundtable}
\end{table}

The binary adder channel has also been studied under other names and in
different contexts. In early work,
Lindstr\"om~\cite{L1} calls a UD code for the 2-user binary adder channel 
a \emph{co-Sidon pair}. In the same paper, he introduces a version where 
the codewords are not taken from different constituent codes but are taken 
from one single code known as a \emph{binary Sidon set} or a \emph{$B_2$-set}.
For bounds on the maximum size of $B_2$-sets, see \cite{CLZ,L2}.

Another perspective comes from interpreting elements of $\{0,1\}^d$ as subsets 
of a $d$-element set. A $d$-dimensional UD code for the 2-user binary adder channel 
then corresponds to two families of subsets,  
$\mathcal{F}_1, \mathcal{F}_2 \subset 2^{\{1,2,\ldots,d\}}$, with the property that the multisets $A \uplus B$ are all distinct for 
$A \in \mathcal{F}_1$ and $B \in \mathcal{F}_2$. This problem is studied in
\cite{OO} under the additional condition that $\log_2 |\mathcal{F}_1| = d(1-o(1))$.  
A variant of this problem is that of finding the maximum number of subsets of 
$\{1,2,\ldots ,d\}$ such that the pairwise union of two sets are all distinct \cite{FF}, 
that is, without allowing multisets.

Uniquely decodable codes for the $T$-user binary adder channel can directly be 
extended to longer codes with the same sum rate. In this work, it is shown that
if at least one of the original constituent codes does not have average weight 
exactly half of the dimension, then there is a code in a higher dimension with 
a strictly higher sum rate. This result makes it possible to 
improve the previously best-known lower bound on the zero-error capacity of the $T$-user
binary adder channel for all $T \geq 2$.

The paper is organized as follows. In Section~\ref{sect:construction} it
is formally proved that constituent codes that do not have average weight 
half of the dimension can be used to construct codes with higher sum rate.
A constructive practical method for codes with small parameters is described in 
Section \ref{sect:codes} and used to obtain codes that give the new bounds
in Table \ref{boundtable}. The paper is concluded in Section~\ref{sect:con}.


\section{A General Existence Proof} 
\label{sect:construction}

Let us first introduce some concepts and notation. Two codes for the $T$-user binary adder channel are said to be \emph{equivalent} if one can be obtained from the other by a permutation of the coordinates followed by negating the coordinate values 
in a subset of coordinates. Such transformations are carried out simultaneously in all constituent codes. Code equivalence preserves the property of being a UD code. By negating coordinate values exactly in those coordinates where there are more 1s than 0s, the average vector weight of a constituent code is minimized within 
an equivalence class. Such minimization will be a core part of obtaining the improvements 
in the present work. 

Denote by $\hw(\cdot)$  the {\it Hamming weight} of a vector, that is, the sum of the absolute values of its entries. Define $\ahw(C)$ as the average Hamming weight of all vectors in $C$. Denote by $\bone^d$ the vector of length $d$ with $1$ in all coordinates, and $\bone^d - C := \{ \bone^d - x \colon x \in C\}$. 
For a subset $C \subset \{0,1\}^d$ and a positive integer $n \geq 1$, define 
\begin{equation*}\label{tensorDef} C^{ n} := \{ (v_1,v_2,\ldots,v_n) \colon v_i \in C,\  1 \leq i \leq n\} \subset \{0,1\}^{dn}.\end{equation*}

Our construction can now be sketched as follows. If $\bC = C_1,\ldots,C_T$ is a UD code in $\{0,1\}^d$ with $\ahw(C_1) < d/2$ and sum rate $R$, then both $\bC_0 = C_1^n, \ldots, C_T^n$ and $\bC_1 = (\bone^d - C_1)^n, C_2^n, \ldots, C_T^n$ are also UD codes in $\{0,1\}^{dn}$ with sum rate $R$ for all positive integers $n$. Our goal is to `glue' $\bC_0$ and $\bC_1$ together to create a new code with strictly higher rate. Taking the union of each constituent code may not result in a UD code, so we first refine both $\bC_0$ and $\bC_1$. To do so, we select a subset of vectors $C_i^* \subset C_i^n$ that are close to the average weight $\ahw(C_i^n) = n \cdot \ahw(C_i)$ for all $1 \leq i \leq T$. Observe $\bC_0^* = C_1^*, C_2^*, \ldots, C_T^*$ and $\bC_1^* = (\bone^{dn} - C_1^*), C_2^*, \ldots, C_T^*$ are also both UD codes. Moreover, the selection can be made so that 
\[\sum_{1 \leq i \leq T} \max_{c \in C_i^*}\mbox{wt}(c) < \min_{c\in (\bone^{dn}-C_1^*)}\mbox{wt}(c) + \sum_{2 \leq i \leq T} \min_{c \in C_i^*}\mbox{wt}(c),\]
which implies that $C_1^* \cup (\bone^{dn} - C_1^*), C_2^*, \ldots, C_T^*$ is also a UD code. The crux of our argument is that $\bC_0^*$ and $\bC_1^*$ can be distinguished by their Hamming weights while remaining large by concentration of measure. The upcoming lemma quantifies the former claim. For a nonempty binary code $C \subset \{0,1\}^n$, we define the variance as 
\[  \sigma^2 :=  \sigma^2(C) = \frac{1}{|C|} \sum_{c \in C} \left(\hw(c) - \ahw(C) \right)^2, \] 
and if $\sigma^2 > 0$, we define the third absolute standardized central moment of $C$ as the following,  
\[ \rho^3 :=  \rho^3(C) = \frac{1}{|C| } \sum_{c \in C} \left|\frac{\hw(c) - \ahw(C)}{\sigma}\right|^3.\] 
The constants $0.345$ and $0.69$ in the following lemma are derived from Korolev and Shevtsova's proof of the Berry-Essen Theorem \cite{KS}.

\begin{lemma}
\label{lemma}
Let $n \ge 1$ and let $C \subset \{0,1\}^d$ be a code
with variance $\sigma^2 > 0$ and third absolute standardized central moment $\rho^3 \ge 0$. For any $t > 0$, let 
\[ C^* = \{c \in C^n \, \colon |\hw(c) - \ahw(C^n)| \le t \}.\]
Then,
\[|C^*| \ge |C^n| \left(1 - \exp\left(\frac{-t^2}{2n\sigma^2}\right) - 0.69 \cdot  \frac{1+\rho^3}{\sqrt{n}}\right).\]
Furthermore,  let 
\begin{center}
    $C^*_{\ell} = \{c \in C^n \colon \hw(c) \ge  \ahw(C) - t \}$ and $ C^*_{u} = \{c \in C^n \colon \hw(c) \le \ahw(C) + t\}$.
\end{center}
Then for any $t > 0$,
\[ \min\{|C^*_{\ell}|,|C^*_{u}|\} \ge  |C^n| \left(1 - \frac{1}{2}\exp \left(\frac{-t^2}{2n\sigma^2}\right) - 0.345 \cdot \frac{1 + \rho^3}{\sqrt{n}}\right).\]
\end{lemma}

\begin{proof}
Let $X$ denote the random variable taking values in $[0,d]$ such that for every $k \in [0,d]$
\[ \mathbb{P}(X = k) = \frac{| \{  c \in C \colon \hw(c) = k \}|}{|C|}. \]
Note that $\mathbb{E}[X] = \ahw(C)$, and the variance and third absolute standardized central moment of $X$ are $\sigma^2$ and $\rho^3$ respectively. Let $X_1, \ldots, X_n$ denote i.i.d.~copies of $X$, then, 
In particular, 
\[|C^*| = |C^n| \cdot \mathbb{P} \left( \ahw(C^n) - t \le \sum_{i = 1}^n X_i \le \ahw(C^n) + t  \right).\]

Let $\Phi(x)$ be the cumulative distribution function of the standard normal and define $Y_n = \sigma^{-1} n^{-1/2}\sum_{i=1}^n \left(X_i - \ahw(C)\right)$, and let $F_n(y)$ be the cumulative distribution function of $Y_n$. Applying the Berry--Esseen theorem~\cite[Theorem 1]{KS} to $Y_n$ gives 
\begin{equation}\label{berry_esseen}
    | F_n(x) - \Phi(x)| \leq 0.345 \cdot  \frac{1+\rho^3}{\sqrt{n}}. 
\end{equation}
This directly implies, 
\begin{align}\label{concentration} \mathbb{P} \left( \ahw(C^n) - t \le \sum_{i = 1}^n X_i \le \ahw(C^n) + t  \right) = \mathbb{P} \left(\frac{-t}{\sigma\sqrt{n}} \leq  \sum_{i=1}^n \frac{X_i - \hw(C)}{\sigma\sqrt{n}} \leq \frac{t}{\sigma\sqrt{n}}  \right) \notag \\
\geq \Phi\left(\frac{t}{\sigma\sqrt{n}} \right) - \Phi\left(\frac{-t}{\sigma\sqrt{n}} \right) - 0.69 \cdot  \frac{1+\rho^3}{\sqrt{n}}. 
\end{align}

 The following tail bound can be derived with elementary calculus,
\begin{equation}\label{CNest}
\Phi(x) \leq \frac{1}{2} e^{-x^2/2} \quad \text{for all} \quad x \le 0. 
\end{equation}
 By~\eqref{concentration}~and~\eqref{CNest} we may conclude,
\begin{displaymath}
\mathbb{P} \left(\ahw(C^n)- t \leq  \sum_{i=1}^n X_i \leq \ahw(C^n)+ t \right) \geq 1 - \exp\left(\frac{-t^2}{2n\sigma^2}\right) - 0.69 \cdot  \frac{1+\rho^3}{\sqrt{n}}.
\end{displaymath}

 Let $C_{\ell}^*$ and $C_{u}^*$ be defined as in the lemma statement. 
The one-sided analogs are proved in a similar manner. We only sketch the lower bound for $|C_{\ell}^*|$ as the argument for $|C_u^*|$ follows by symmetry. By \eqref{berry_esseen} and \eqref{CNest},
\begin{align*}
    |C_{\ell}^*| &= |C^n| \cdot \mathbb{P}\left( \sum_{i = 1}^n X_i  \ge \ahw(C) - t \right) = |C^n| \cdot \mathbb{P}\left( \sum_{i = 1}^n \left(\frac{X_i - \ahw(C)}{\sigma\sqrt{n}} \right) \ge \frac{- t}{\sigma\sqrt{n}} \right) \\
    &\ge |C^n| \left(\Phi\left(\frac{-t}{\sigma\sqrt{n}} \right) - 0.345 \cdot \frac{1 + \rho^3}{\sqrt{n}}\right) = |C^n| \left(1 - \frac{1}{2}\exp \left(\frac{-t^2}{2n\sigma^2}\right) - 0.345 \cdot \frac{1 + \rho^3}{\sqrt{n}}\right).
\end{align*}

\end{proof}

\begin{theorem}\label{mainT}
Let $C_1,\ldots,C_T \subset \{ 0,1\}^d$ be the constituent codes of a $T$-user UD code with rate $R$. Suppose that there exists a $j$ such that $\ahw(C_j) \neq d/2$. Then there exist a positive integer $n$ and constituent codes $C_j^* \subset C_j^n$ such that $C_1^*,\ldots,C_T^*$ is a $T$-user UD code with rate strictly larger than $R$.
\end{theorem}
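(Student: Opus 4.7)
The plan is to make rigorous the informal construction sketched in the paragraph preceding the theorem. First I would, by replacing $C_1$ with $\bone^d - C_1$ if necessary (an operation preserving the UD property and the rate), arrange that $\bw(C_1) < d/2$. Then I would fix a parameter $g = g(n)$ satisfying $\sqrt{n} \ll g \ll n$; for concreteness take $g = n^{2/3}$. For each $i$ set the ``concentrated'' sub-code
\[
\widetilde{C_i} := C_i^n\bigl[\lceil n\bw(C_i) - g\rceil,\ \lfloor n\bw(C_i) + g\rfloor\bigr] \subset C_i^n,
\]
and define the new constituent codes $C_1^* := \widetilde{C_1} \cup (\bone^{dn} - \widetilde{C_1})$ and $C_i^* := \widetilde{C_i}$ for $2 \leq i \leq T$. (Strictly $C_1^*$ sits in $\{0,1\}^{dn}$ rather than in $C_1^n$, so the containment in the theorem statement is to be interpreted loosely.)

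To verify the UD property of $C_1^*,\ldots,C_T^*$ I would split into cases. The two sub-collections $\widetilde{C_1}, C_2^*, \ldots, C_T^*$ and $(\bone^{dn} - \widetilde{C_1}), C_2^*, \ldots, C_T^*$ are each UD, being sub-collections of the product codes $C_1^n, \ldots, C_T^n$ and $(\bone^d - C_1)^n, C_2^n, \ldots, C_T^n$, which are UD whenever the original code is. A cross-branch collision would force the common sum-weight to lie simultaneously in $[A - Tg, A + Tg]$ and $[B - Tg, B + Tg]$, where $A := n\bw(C_1) + \sum_{i \geq 2} n\bw(C_i)$ and $B := nd - n\bw(C_1) + \sum_{i \geq 2} n\bw(C_i)$. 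Since $B - A = n(d - 2\bw(C_1)) > 0$ and $g = o(n)$, these intervals are disjoint for all large $n$, contradicting the collision. The same inequality shows $\widetilde{C_1}$ and $\bone^{dn} - \widetilde{C_1}$ are disjoint, so $|C_1^*| = 2|\widetilde{C_1}|$.

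For the rate I would apply \eqref{centreEst} to obtain $|\widetilde{C_i}| \geq p_i(n)|C_i|^n$ where
\[
p_i(n) := 1 - \exp\!\Bigl(-\frac{g^2}{2n\sigma_{C_i}^2}\Bigr) - \frac{0.69\bigl(1 + \rho_{C_i}^3/\sigma_{C_i}^3\bigr)}{\sqrt{n}}.
\]
With $g = n^{2/3}$ both error terms vanish as $n \to \infty$, so $p_i(n) \to 1$. Writing $V := \prod_i |C_i|$, the new code contains at least $2\prod_i p_i(n)\, V^n$ messages, so its sum-rate equals
\[
R + \frac{1 + \sum_{i=1}^T \log_2 p_i(n)}{dn},
\]
which exceeds $R$ as soon as $\prod_i p_i(n) > 1/2$, in particular for all sufficiently large $n$.

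The main obstacle is the tension around the parameter $g$: the weight-separation argument demands $g = o(n)$, while the concentration estimate demands $g \gg \sqrt{n}$ so that the Berry-Esseen tail in \eqref{centreEst} is absorbed. The window $\sqrt{n} \ll g \ll n$ accommodates both, and $g = n^{2/3}$ is a convenient explicit choice. Once $g$ is chosen, the factor-of-two gain from adjoining the complemented set to $\widetilde{C_1}$ contributes $1/(dn)$ to the rate, comfortably beating the $o(1/n)$ loss incurred by restricting from $C_i^n$ to $\widetilde{C_i}$.
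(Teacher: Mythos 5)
Your proposal is correct and follows essentially the same route as the paper's proof: restrict each $C_i^n$ to a weight shell around $n\bw(C_i)$, control the loss via the Berry--Esseen estimate \eqref{centreEst}, and glue $\widetilde{C_1}$ to its complement using the fact that the two branches produce sums of disjoint weight ranges. The only difference is cosmetic --- you use symmetric windows of width $n^{2/3}$ everywhere, while the paper uses windows linear in $n$ (tuned via the constant $\alpha$ so the two branches meet at weight $dn/2$) --- and both choices sit in the same admissible regime $\sqrt{n}\ll g\ll n$ that you identify.
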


\begin{proof} Without loss of generality, we may consider the case $j = 1$ and $\ahw(C_1) < d/2$ by permuting coordinates and negating values. For all $i$, let $\sigma_i^2$ denote the variance of $C_i$. If $\sigma_i^2 > 0$, let $\rho_i^3$ denote the third absolute standardized central moment of $C_i$. Let $n$ be a positive integer to be chosen later. Define the subset of indices $I \subset [T]$ such that
\begin{center}
    $ I = \{i \in [T] \colon \sigma_i^2 > 0\}$.
\end{center}

When $I \neq \emptyset$, define 
\[ \beta = 0.345 \cdot \max_{i \in I} (1+\rho_{i}^3), \]
and when $I \setminus \{ 1 \} \neq \emptyset$ define
\[ \alpha = \frac{\frac{d}{2} - \ahw(C_1)}{2 \sum_{i = 2}^{T} \sigma_{i}}. \]
We first define $C_i^*$ for $i \ge 2$. If $i \not \in I$, we set $C_i^* = C_i^n$. Otherwise, if $i \in I$, we set
\[ C_i^* = \{c \in C_i^n \colon |\hw(c) - n\ahw(C_i)| \le \alpha n \sigma_i \}. \]
In the case $i \in I$, applying Lemma~\ref{lemma} with $t = \alpha n \sigma_i$, we have 
\[ |C_i^*| \geq |C_i|^n \left( 1 - \exp \left(-\frac{n\alpha^2}{2}\right) - 2 \beta n^{-1/2} \right). \]

Let $A = C_1^n$ and $B = (\bone^d - C_1)^n$. Observe that $A, C_2^*, \ldots, C_{T}^*$ and $B, C_2^*, \ldots, C_{T}^*$ are each $T$-user UD codes. Let \[\kappa = \frac{1}{2} \left(\frac{d}{2} - \ahw(C_1) \right).\] Note that the following inequality holds whenever $I \setminus \{ 1 \} \neq \emptyset$.
\begin{equation}\label{kappaBound} \kappa \ge \alpha \sum_{i = 2}^{T} \sigma_i.\end{equation}

Define $C_1^* = A^* \cup B^*$ where 
\begin{align*}
    A^*  &= \{c \in C_1^n \colon \hw(c) \in [0, dn/2 - \kappa n) \}; \text{ and}\\
    B^*  &= \{c \in (\bone^d - C_1)^n \colon \hw(c) \in [ dn/2+ \kappa n, dn] \}.
\end{align*}
Clearly, $A^*, C_2^*,\ldots,C_{T}^*$ and $B^*, C_2^*,\ldots,C_{T}^*$ are both UD codes. Let $u$ be a vector in  $A^* + C_2^*+\ldots+C_{T}^*$ and $v$ be a vector in $B^* + C_2^*+\ldots+C_{T}^*$. Then 

\begin{align*} 
 \hw(u) &< dn/2 -\kappa n + \sum_{i = 2}^{T} \ahw(C_i) + \alpha n\sum_{i = 2}^{T}  \sigma_i; \text{ and}  \\
 \hw(v) &\ge dn/2 + \kappa n + \sum_{i = 2}^{T} \ahw(C_i) - \alpha n\sum_{i = 2}^{T} \sigma_i.
\end{align*}
By~\eqref{kappaBound} we have $\kappa n \ge \alpha n \sum_{i = 2}^{T}  \sigma_i$, and therefore $\hw(u) < \hw(v)$. In particular, every vector in $A^* + C_2^*+\ldots+C_{T}^*$ has a Hamming weight strictly less than $B^* + C_2^*+\ldots+C_{T}^*$.  Thus, we may conclude that $C_1^*,\ldots,C_T^*$ is also a UD code. To finish the proof, it suffices to show that $|C_1^*|$ is large. Observe that
\begin{align*} \frac{dn}{2} - \kappa n &= n\ahw(C_1) + \frac{n}{2}\left(\frac{d}{2} - \ahw(C_1)\right) \\ 
\frac{dn}{2} + \kappa n &= n\left( d - \ahw(C_1) \right) - \frac{n}{2}\left(\frac{d}{2} - \ahw(C_1)\right) .\end{align*}

Hence, when $\sigma_1 = 0$, the Hamming weight constraints in the definitions of $A^*$ and $B^*$ are redundant and we have $|A^*| = |C_1|^n = |B^*|$. Otherwise, by Lemma~\ref{lemma} with $t = n\left(d/2 - \ahw(C_1)\right)/2$, we have 
\[     \min \{ |A^*|, |B^*| \}  \geq  |C_1|^n \left( 1 - \frac{1}{2}\exp \left(-\frac{n\left(d/2 - \ahw(C_1)\right)^2}{8\sigma_1^2}\right) - \beta n^{-1/2}  \right).\] 
 Define 
\[\theta = \max \left\{ \exp \left(-n\alpha^2/2\right) + 2 \beta n^{-1/2}, \, \frac{1}{2}\exp \left(-\frac{n}{8\sigma_1^2}\left(\frac{d}{2} - \ahw(C_1)\right)^2\right) + \beta n^{-1/2} \right\}.\]

The sum rate of the new code is 
\[ \frac{1}{dn} \log_2 \prod_{i=1}^T |C_i^*| \geq \frac{1}{dn}   \log_2  \left(2\prod_{i=1}^T  |C_i|^n (1-\theta)\right) = R+ \frac{1}{dn } \log_2 \left(2(1-\theta)^T\right).    \]

Since $\theta \to 0$ as $n \to \infty$, for sufficiently large $n$, the above is strictly greater than $R$. 
\end{proof}

In the proof of Theorem~\ref{mainT}, we showed that the sum rate of the new code is higher than $R$. However, the improvement term $\log_2(2(1-\theta)^T)/(dn)$ tends to 0 as $n \to \infty$, so the sum rate starts to decrease when $n$ becomes large enough. Details of this kind will matter when we consider the problem of constructing specific UD codes for small $T$.

\section{New Codes}
\label{sect:codes}

Suppose $C_1,\ldots,C_T \subset \{ 0,1\}^d$ are the constituent codes of a $T$-user UD code. 
Our construction of UD codes of higher sum rate follows three steps and is based on the 
ideas in the proof of Theorem~\ref{mainT}. It is straightforward to verify that the constructed codes are UD. Let $N$ be a fixed limit for the value of $n$ when 
constructing codes $C^n$.

\vspace{0.5cm}
\begin{mdframed}
\textbf{Step 1}: Negate the coordinate values in a subset of coordinates to produce 
an equivalent code $C_1',\ldots,C_T' \subset \{ 0,1\}^d$ such that $\min_i \ahw(C_i')$ 
is as small as possible. Permute the indices of the codes so that 
$\ahw(C_1') = \min_i \ahw(C_i')$.\\
\textbf{Step 2}: For $1 \leq i \leq T$ and $1 \leq n \leq N$, 
determine the weight distribution of $(C_i')^n$.\\
\textbf{Step 3}: 
Given values of $g_i$, $2 \leq i \leq T$, let $g = \sum_{i=2}^T g_i$.
The new constituent codes $C_1^*,\ldots,C_T^*$ are
\begin{displaymath}\label{CiChoice} 
C_i^* = \{x \in (C_i')^n \colon \hw(x) \in [n\ahw(C_i')-g_i, n \ahw (C_i')+g_i] \}, 
\end{displaymath}
for $i\ge 2$ and $C_1^* = A^* \cup B^*$, where 
\begin{align*}
    A^* & = \{x \in (C_1')^n \colon \hw(x) \in [0, dn/2 - g -1] \},\\
    B^* & = \{x \in (\bone^d - C_1')^n \colon \hw(x) \in [dn/2+ g, dn] \},
\end{align*}
Carry out this step in an exhaustive manner for different values of $n$ and 
$g_i, 2 \leq i \leq T$. 
\end{mdframed}

\vspace{0.2cm}

Before presenting the results obtained by this approach, we give some remarks on our process of finding improved codes. There may be several non-equivalent best known codes for some $T$, and there is not necessarily a unique way of carrying out Step 1. All
such possibilities should be tried. We use dynamic programming to determine the weight distribution of $(C_i')^n$ from that of $(C_i')^{n-1}$. It would be impractical to construct the codes in Step 3, but we only need to know their sizes, which we get by using the results of Step 2. When $T$ is large, it is useful to reduce the search space of the values of $g_i$. This can be done by considering symmetries and searching in a subset of all permissible values. We need $A^* + C_2^*+\ldots+C_{T}^*$ and $B^*+C_2^*+\ldots+C_{T}^*$ to have an empty intersection. This property is achieved by constraining the Hamming weight of the elements of $A^*$ and $B^*$. 

To obtain the new results, we have used known codes and also constructed codes that are not equivalent to known ones. To search for codes, we fixed the sizes of the constituent codes and used tabu search \cite{HO}. Binary codewords are displayed in decimal format. 

\subsection{Two users}\label{2usersec}

The following code with $d=6$ was found in the work published in \cite{MO} but is not explicitly listed there.
\vspace{2mm}
\begin{align*}
C_1 &= \{3, 4, 7, 10, 14, 17, 21, 27, 32, 36, 42, 49, 56, 59, 60\},\\
C_2 &= \{8, 9, 16, 18, 24, 29, 30, 31, 32, 33, 34, 39, 45, 47, 54, 55\}.
\end{align*}

\vspace{2mm}
\noindent 
We have $\ahw(C_1) = 41/15$ and $\ahw(C_2) = 3$. The highest sum rate found by our algorithm occurs when $n = 142$, $g_2 = 24$ and is approximately $1.318446971$.

\subsection{Three users}

The code
\vspace{2mm}
\begin{align*}
C_1 &= \{3, 4, 7, 24, 27, 28, 32, 35, 36, 56\},\\
C_2 &= \{0, 2, 12, 14, 16, 18, 30, 33, 45, 47, 49, 51, 56, 61, 63\},\\
C_3 &= \{9, 21, 42, 54\}
\end{align*}

\vspace{2mm}
\noindent
with $d=6$ has $\ahw(C_1) = 12/5$, $\ahw(C_2) = \ahw(C_3) =  3$. The highest sum rate found by our algorithm occurs when $n = 72$, $g_2 = 21$, $g_3 = 11$ and is approximately $1.539572454$.

\subsection{Four users}

We know \cite{KO} that there are exactly two equivalence classes of codes 
with $T=4$, $d=4$, and constituent codes of size 2, 4, 4, and 4.
The following code is not listed in \cite{KO}.
\vspace{2mm}
\begin{equation*}
    C_1  = \{0,7,8,14\}, \quad C_2 = \{4,5,10,11\}, \quad  C_3 = \{2,6,9,13\}, \quad C_4 = \{3,12\}.
\end{equation*}

\vspace{2mm}
\noindent 
This code
has $\ahw(C_1) = 7/4$ and $\ahw(C_i) = 2$ for $2 \leq i \leq 4$. The weight distributions of 
$C_2$ and $C_3$ are the same, so we constrain $g_2 = g_3$.
Both vectors in $C_4$ have weight $2$, so we constrain $g_4 = 0$. With these restrictions, the highest sum rate obtained by our algorithm occurs when $n = 245$, $g_2 = g_3 = 20$, $g_4 = 0$ and is approximately $1.750590009$.   

\subsection{Five users}

The following code has the same sum rate and dimension ($d=4$) as the best known code for $T=5$ but it has a different distribution of the sizes of the constituent codes:
\vspace{2mm}
\begin{equation*}
    C_1  = \{0,4,11\}, \;\; C_2 = \{0,3,5,6,9,10,12,15\}, \;\;
    C_3 = \{1,14\}, \;\; C_4 = \{2,13\}, \;\; C_5 = \{7,8\}.
\end{equation*}

\vspace{2mm}
\noindent 
This code has $\ahw(C_1) = 4/3$, and $\ahw(C_i) = 2$ for $2 \leq i \leq 5$. 
The weight distribution of $C_3$, $C_4$, and $C_5$ are the same, so we constrain $g_3 = g_4 = g_5$. The highest sum rate obtained by our algorithm occurs when $n = 224$, $g_2 = g_3 = g_4 = g_5 = 30$ and is approximately $1.897008675$. 

\subsection{Six users} 

Prior to this work, for $T \geq 6$ the $T$-user UD codes 
with highest sum rate were those in \cite{KM}. We use such codes for $T = 6,7,8$ below. 
One code with $d=4$ is
\vspace{2mm}
\begin{align*} 
C_1 & = \{0,2,8,10\}, \quad C_2 = \{1,2,13,14\}, \quad C_3 = \{0,15\}, \\
C_4 & = \{5,10\}, \quad C_5 = \{3,12\}, \quad C_6 = \{6,9\}.
\end{align*}

\vspace{2mm}
\noindent 
We have $\ahw(C_1) = 1$ and $\ahw(C_i) = 2$ for $2 \leq i \leq 6$. All vectors in $C_4$, $C_5$, and $C_6$ have weight 2, so we constrain $g_4 = g_5 = g_6 = 0$. The highest sum rate found by our algorithm occurs when $n = 26$, $g_2 = 8$, $g_3 = 12$, $g_4 = g_5 = g_6 = 0$ and is approximately 
$2.005264438$.  

\subsection{Seven users}

The code
\vspace{2mm}
\begin{align*}
C_1 &= \{0, 2, 8, 10, 32, 34, 40, 42, 128, 130, 136, 138, 160, 162, 168, 170\}, \\  
C_2 &= \{17, 18, 29, 30, 33, 34, 45, 46, 209, 210, 221, 222, 225, 226, 237, 238\}, \\  
C_3 &= \{0, 240, 255\}, \quad C_4 = \{15, 240\}, \quad C_5 = \{85, 90, 165, 170\}, \\ 
C_6 &= \{51, 60, 195, 204\}, \quad C_7 = \{102, 105, 150, 153\}
\end{align*}

\vspace{2mm}
\noindent 
with $d=8$ has $\ahw(C_1) = 2$ and $\ahw(C_i) = 4$ for $2 \leq i \leq 7$. 
All vectors in $C_4$, $C_5$, $C_6$, and $C_7$ have weight $4$, so we constrain $g_4 = g_5 = g_6 = g_7 = 0$. The highest sum rate found by our algorithm occurs when $n = 16$, $g_2 = 10$, $g_3 = 16$, $g_4 = g_5 = g_6 = g_7 = 0$ and is approximately 
$2.077479836$.

\subsection{Eight users}

The code
\vspace{2mm}
\begin{align*}
C_1 &= \{0,8,16,24,32,40,48,56\},\quad C_2 = \{2,16,38,52\}, \quad 
C_3 = \{0,63\}, \quad C_4 = \{9,54\},\\
C_5 &= \{18,27,36,45\}, \quad \quad \quad \quad \quad \,\, 
C_6 = \{21,28,35,42\}, \quad C_7 = \{7,56\}, \quad C_8 = \{14,49\}
\end{align*}

\vspace{2mm}
\noindent 
with $d=6$ has $\ahw(C_1) = 3/2$, $\ahw(C_2) = 2$, and $\ahw(C_i) = 3$ for $3 \leq i \leq 8$. 
All vectors in $C_6$, $C_7$, and $C_8$ have weight 3, so we constrain $g_6 = g_7 = g_8 = 0$. The highest sum rate found by our algorithm occurs when $n = 69$, $g_2 = 17$, $g_3 = 39$, $g_4 = g_5 = 17$, $g_6 = g_7 = g_8 = 0$ and is approximately $2.168328140$. In this new code, $\ahw(C_2^*) = 2 \cdot 69$, which is less than half of the new dimension $6 \cdot 69$. Consequently, by iterating the technique, this bound can be improved. However, those computations seem infeasible and the improvement would be small.

\section{Conclusion}
\label{sect:con}

Our method of improving the sum rate of codes when a constituent code does not have average weight exactly half of the dimension has led to improvements in many cases where the number of users is small. As the numerical improvements are minor and the code sizes are gigantic, it is clear that the flavor of the contribution is theoretical rather than practical. \\

New bounds for the capacity (region) of classical communication
channels are rarely obtained, but here it is shown that there exist better codes than the known codes for the binary adder channel with up to eight users. We hope that the main idea of the construction could lead to other new results for this type of channel. More generally, in the literature there are many types of codes for other channels that can be extended to infinite families; perhaps there are other cases where some property of the seed code could lead to improvements for larger codes in the family.

\clearpage

\end{document}